\def\frk{\frak}               % font for "Fraktur"
\def\Phi{{\frk n}}
\def\Phi{{\frk N}}
\def\opn#1#2{\def#1{\operatorname{#2}}} % to make operators
\opn\chara{char} \opn\length{\ell} \opn\pd{pd} \opn\rk{rk}
\opn\projdim{proj\,dim} \opn\injdim{inj\,dim} \opn\rank{rank}
\opn\depth{depth} \opn\sdepth{sdepth} \opn\fdepth{fdepth}
\opn\grade{grade} \opn\height{height} \opn\embdim{emb\,dim}
\opn\codim{codim}  \opn\min{min} \opn\max{max}
\opn\Tr{Tr} \opn\bigrank{big\,rank}
\opn\superheight{superheight}\opn\lcm{lcm}
\opn\trdeg{tr\,deg}%\emph{
\opn\reg{reg} \opn\lreg{lreg} \opn\ini{in} \opn\lpd{lpd}
\opn\size{size}
\opn\div{div} \opn\Div{Div} \opn\cl{cl} \opn\Cl{Cl}
\opn\Spec{Spec} \opn\Supp{Supp} \opn\supp{supp} \opn\Sing{Sing}
\opn\Ass{Ass} \opn\Min{Min}
\opn\Ann{Ann} \opn\Rad{Rad} \opn\Soc{Soc}
\opn\Im{Im} \opn\Ker{Ker} \opn\Coker{Coker} \opn\Am{Am}
\opn\Hom{Hom} \opn\Tor{Tor} \opn\Ext{Ext} \opn\End{End}
\opn\Aut{Aut} \opn\id{id}  \opn\deg{deg}
\opn\nat{nat}
\opn\pff{pf}%   \pf exists already
\opn\Pf{Pf} \opn\GL{GL} \opn\SL{SL} \opn\mod{mod} \opn\ord{ord}
\opn\Gin{Gin} \opn\Hilb{Hilb}
\opn\aff{aff} \opn\con{conv} \opn\relint{relint} \opn\st{st}
\opn\lk{lk} \opn\cn{cn} \opn\core{core} \opn\vol{vol}
\opn\link{link} \opn\star{star}
\opn\gr{gr}
\def\pot#1#2{#1[\kern-0.28ex[#2]\kern-0.28ex]}
\opn\dirlim{\underrightarrow{\lim}}
\opn\inivlim{\underleftarrow{\lim}}
\let\to=\rightarrow
\def\Implies{\ifmmode\Longrightarrow \else
        \unskip${}\Longrightarrow{}$\ignorespaces\fi}
\def\implies{\ifmmode\Rightarrow \else
        \unskip${}\Rightarrow{}$\ignorespaces\fi}
\def\iff{\ifmmode\Longleftrightarrow \else
        \unskip${}\Longleftrightarrow{}$\ignorespaces\fi}
\newtheorem{Theorem}{Theorem}[section]
\newtheorem{Lemma}[Theorem]{Lemma}
\newtheorem{Corollary}[Theorem]{Corollary}
\newtheorem{Proposition}[Theorem]{Proposition}
\newtheorem{Remark}[Theorem]{Remark}
\newtheorem{Example}[Theorem]{Example}
\let\epsilon\varepsilon
\let\phi=\varphi
\let\kappa=\varkappa
\def\qed{\ifhmode\textqed\fi
      \ifmmode\ifinner\quad\qedsymbol\else\dispqed\fi\fi}
\def\textqed{\unskip\nobreak\penalty50
       \hskip2em\hbox{}\nobreak\hfil\qedsymbol
       \parfillskip=0pt \finalhyphendemerits=0}
\def\dispqed{\rlap{\qquad\qedsymbol}}
\opn\dis{dis}
\def\pnt{{\raise0.5mm\hbox{\large\bf.}}}
\opn\Lex{Lex}
\begin{document}

\title{\bf The Stanley conjecture on monomial almost complete   intersection ideals}

\author{Mircea Cimpoea\c s }

\thanks{The  support from  the UEFISCDI grant 247/2011 of Romanian Ministry of Education, Research and Innovation is gratefully acknowledged.}
\address{Mircea Cimpoea\c s, Simion Stoilow Institute of Mathematics, Research unit 5, P.O.Box 1-764, Bucharest 014700, Romania}
\email{mircea.cimpoeas@imar.ro}

\maketitle
\begin{abstract} Let $I$ be a monomial almost complete intersection ideal of a polynomial algebra $S$ over a field. Then Stanley's Conjecture holds for $S/I$ and $I$.

  \vskip 0.4 true cm
 \noindent
  {\it Key words } : Monomial Ideals,  Stanley decompositions, Stanley depth.\\
 {\it 2000 Mathematics Subject Classification: Primary 13C15, Secondary 13F20, 13F55, 13P10.}
\end{abstract}

\section*{Introduction}

Let $K$ be a field and $S=K[x_1,\ldots,x_n]$ the polynomial ring over $K$.
Let $M$ be a $\mathbb Z^n$-graded $S$-module. A {  \em Stanley decomposition} of $M$ is a direct sum $\mathcal D: M = \bigoplus_{i=1}^rm_i K[Z_i]$ as $K$-vector space, where $m_i\in M$, $Z_i\subset\{x_1,\ldots,x_n\}$ such that $m_i K[Z_i]$ is a free $K[Z_i]$-module. We define $\sdepth(\mathcal D)=\min_{i=1}^r |Z_i|$ and $\sdepth_S(M)=\max\{\sdepth(\mathcal D)|\;\mathcal D$ is a Stanley decomposition of $M\}$. The number $\sdepth_S(M)$ is called the {\em Stanley depth} of $M$. It is conjectured by Stanley \cite{stan} that $\depth_S(M) \leq \sdepth_S(M)$ for all $\mathbb Z^n$-graded $S$-modules $M$. Herzog, Vladoiu and Zheng show in \cite{hvz} that this invariant can be computed in a finite number of steps if $M=I/J$, where $J\subset I\subset S$ are monomial ideals.
In this paper, we prove that if $I\subset S$ is a monomial ideal generated by $m$ monomials, then, there exists a variable $x_j$ which appears in at least $\left\lceil \frac{m}{k} \right\rceil$ generators, where
$k=\max\{|P|:P\in Ass(S/I)\}$, see Lemma \ref{m}. Using this lemma, we prove that Stanley's Conjecture holds for  $S/I$ and $I$, when $I$ has a small number of generators, with respect to $\depth(S/I)$ and $k$, see Theorem \ref{ma}, in particular this is the case when $I$ is a monomial almost complete intersection ideal (see Corollary \ref{main}).

\section{Stanley depth}

Firstly, we recall several results.

\begin{Proposition}\cite[Proposition 1.2]{mir}\label{1}
Let $I \subset S$ be a monomial ideal, minimally generated by $m$ monomials. Then $\sdepth(S/I) \geq n-m$.
\end{Proposition}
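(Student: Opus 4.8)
The plan is to prove the inequality by induction on the number of variables $n$, splitting off the variable $x_n$ via the standard short exact sequence. Write $S'=K[x_1,\ldots,x_{n-1}]$, so that $S=S'[x_n]$. For $n=1$ the ideal is either $0$ (with $m=0$) or a principal ideal $(x_1^a)$ (with $m=1$), and in both cases $\sdepth(S/I)\geq n-m$ is immediate. For the inductive step I would run a secondary induction on the total $x_n$-degree $D=\sum_{u}\deg_{x_n}(u)$ taken over the minimal generating set $G(I)$.

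If $D=0$, no minimal generator involves $x_n$, so $I=I'S$ for a monomial ideal $I'\subset S'$ minimally generated by the same $m$ monomials. Any Stanley decomposition $S'/I'=\bigoplus_i m_iK[Z_i]$ then yields $S/I=\bigoplus_i m_iK[Z_i\cup\{x_n\}]$, whence $\sdepth_S(S/I)\geq \sdepth_{S'}(S'/I')+1\geq (n-1-m)+1=n-m$ by the induction hypothesis on $n$.

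If $D>0$, at least one minimal generator is divisible by $x_n$, and I would use the exact sequence of $\mathbb Z^n$-graded modules
\[
0\longrightarrow S/(I:x_n)\xrightarrow{\ \cdot x_n\ } S/I\longrightarrow S/(I,x_n)\longrightarrow 0,
\]
together with the known short-exact-sequence bound $\sdepth(S/I)\geq\min\{\sdepth(S/(I:x_n)),\sdepth(S/(I,x_n))\}$. For the right-hand term, $S/(I,x_n)\cong S'/\bar I$, where $\bar I$ is generated by those minimal generators of $I$ not divisible by $x_n$; since $D>0$ there are at most $m-1$ of them and they remain a minimal (antichain) generating set, so the induction hypothesis on $n$ gives $\sdepth_S(S/(I,x_n))=\sdepth_{S'}(S'/\bar I)\geq (n-1)-(m-1)=n-m$. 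For the left-hand term, $(I:x_n)$ is minimally generated by a subset of $\{u/\gcd(u,x_n):u\in G(I)\}$, hence by $m'\leq m$ monomials, and its total $x_n$-degree is $D-\#\{u\in G(I):x_n\mid u\}<D$; the secondary induction hypothesis then yields $\sdepth_S(S/(I:x_n))\geq n-m'\geq n-m$. Combining the two estimates gives $\sdepth_S(S/I)\geq n-m$.

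The routine points are the base case and the appending argument when $D=0$. The step requiring the most care is the colon term: one must check that passing from $I$ to $(I:x_n)$ never increases the number of minimal generators (so that $m'\leq m$) while strictly decreasing the secondary parameter $D$, which is precisely what makes the double induction well-founded. The only external input is the short-exact-sequence bound for Stanley depth; everything else is elementary bookkeeping on monomial generators.
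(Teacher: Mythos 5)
The paper does not prove this proposition at all --- it is quoted from \cite{mir} --- so there is no in-paper argument to measure yours against; judged on its own, your proof is correct. The double induction is well-founded: the case $D=0$ drops to $n-1$ variables, the $(I,x_n)$ term drops to $n-1$ variables with at most $m-1$ minimal generators (a subset of an antichain is an antichain, as you note), and the colon term keeps $n$ but strictly decreases $D$ without increasing the number of minimal generators (strictly speaking the total $x_n$-degree of $G(I:x_n)$ is \emph{at most} $D-\#\{u\in G(I):x_n\mid u\}$, since $G(I:x_n)$ is only a subset of $\{u/\gcd(u,x_n)\}$, but that still gives the strict decrease you need). The one external ingredient, $\sdepth(M)\geq\min\{\sdepth(U),\sdepth(N)\}$ for a short exact sequence $0\to U\to M\to N\to 0$ of $\ZZ^n$-graded modules, is Rauf's standard lemma; in fact for your particular sequence you can avoid it entirely, because $S/I=(S'/(I\cap S'))\oplus x_n\bigl(S/(I:x_n)\bigr)$ as multigraded $K$-vector spaces --- exactly the decomposition $(*)$ this paper uses in the proof of Theorem \ref{ma} --- and concatenating Stanley decompositions of the two summands gives the bound directly, with $S/(I,x_n)\iso S'/(I\cap S')$. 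So your argument is a legitimate self-contained replacement for the citation, and it is structurally the same split-off-a-variable technique that the paper itself relies on later; what it buys over the bare citation is that the reader sees the mechanism of Theorem \ref{ma} already at work in the simplest case.
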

\begin{Theorem}\cite[Theorem 2.3]{okazaki}\label{2}
Let $I \subset S$ be a monomial ideal, minimally generated by $m$ monomials. Then $\sdepth(I) \geq n-\left\lfloor m/2 \right\rfloor$.
\end{Theorem}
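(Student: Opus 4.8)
The plan is to pass to the combinatorial model of Herzog, Vladoiu and Zheng \cite{hvz} and to exhibit a single, sufficiently good interval partition of the associated poset. Let $u_1,\dots,u_m$ be the minimal monomial generators of $I$, let $g=\lcm(u_1,\dots,u_m)$ and let $a\in\NN^n$ be the exponent vector of $g$. Writing $v_i$ for the exponent vector of $u_i$ (all inequalities between vectors being componentwise), the poset attached to $I$ is
$$P=P_I^g=\{c\in\NN^n : c\le a,\ x^c\in I\}=\bigcup_{i=1}^m\,[v_i,a],$$
a union of $m$ coordinate boxes inside $[0,a]$. By the Herzog--Vladoiu--Zheng correspondence, every partition $\mathcal P$ of $P$ into intervals $[c,d]$ yields a Stanley decomposition of $I$ in which the piece coming from $[c,d]$ is the free module $x^c K[\,x_j : d_j=a_j\,]$, and
$$\sdepth(I)=\max_{\mathcal P}\ \min_{[c,d]\in\mathcal P}\ \#\{j : d_j=a_j\}.$$
Hence it suffices to produce one partition of $P$ into intervals whose tops $d$ are \emph{non-maximal} (that is, $d_j<a_j$) in at most $\lfloor m/2\rfloor$ coordinates each.

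For $m=1$ the ideal is principal, $P=[v_1,a]$ is a single interval with top $a$, and $\#\{j:a_j=a_j\}=n$; this already gives $\sdepth(I)=n$. For $m=2$ I would peel: take $[v_1,a]$ as one interval (top $a$, fully maximal), and decompose the difference $[v_2,a]\setminus[v_1,a]$ by slicing along the coordinates $j$ where $(v_1)_j>(v_2)_j$. Each slice can be arranged as an interval whose top agrees with $a$ in every coordinate except a single sliced one, so every top is non-maximal in at most one coordinate and the minimum of $\#\{j:d_j=a_j\}$ over the partition is $n-1=n-\lfloor 2/2\rfloor$. This ``box minus box costs one coordinate'' computation is the atom of the whole argument.

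For the general case I would induct on $m$ in steps of two, peeling the two boxes $[v_{m-1},a]$ and $[v_m,a]$ off the union $U_{m-2}=\bigcup_{i\le m-2}[v_i,a]$ and decomposing the new region $([v_{m-1},a]\cup[v_m,a])\setminus U_{m-2}$ into intervals; the goal is that this pair of generators costs only \emph{one} extra non-maximal coordinate, so that after $\lfloor m/2\rfloor$ steps every top is non-maximal in at most $\lfloor m/2\rfloor$ coordinates. The main obstacle is exactly this pairing bookkeeping. A naive peeling that removes one box at a time decomposes each ``box minus union of boxes'' region at a cost of one coordinate per subtracted box and yields only $\sdepth(I)\ge n-(m-1)$, because the complements to be decomposed are intersections of box-complements and can force several coordinates below $a$ at once; extracting the factor $\lfloor m/2\rfloor$ requires pairing the generators so that each pair is charged a single lost coordinate, a matching/chain argument on $P$ generalizing the tight analysis of the maximal ideal $\mm=(x_1,\dots,x_n)$ (where $m=n$ and the bound $n-\lfloor n/2\rfloor=\lceil n/2\rceil$ is sharp). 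I would check that the pairing keeps the intervals disjoint, covering $P$, with every top non-maximal in at most $\lfloor m/2\rfloor$ coordinates; as a parallel route I would try replacing a pair $u_{m-1},u_m$ by $\lcm(u_{m-1},u_m)$ to drive the induction, testing both approaches against the maximal-ideal case.
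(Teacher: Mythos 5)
There is a genuine gap, and it sits exactly where you locate it yourself. The paper does not prove this statement at all --- it is quoted from Okazaki \cite{okazaki} --- so your attempt has to stand on its own, and as written it is a plan rather than a proof. The setup via the Herzog--Vladoiu--Zheng poset $P_I^g$ and the cases $m=1,2$ are fine: for $m=2$ the region $[v_2,a]\setminus[v_1,a]$ really does partition into intervals $[c',d']$ with $d'_j=(v_1)_j-1$ in a single coordinate $j$ and $d'_l=a_l$ elsewhere. But the inductive step is the entire content of the theorem, and you only state it as a goal (``the goal is that this pair of generators costs only one extra non-maximal coordinate''; ``I would check that the pairing keeps the intervals disjoint''). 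The difficulty is not bookkeeping: at stage $i$ you must partition $\bigl([v_{m-1},a]\cup[v_m,a]\bigr)\setminus U_{m-2}$, a complement of $m-2$ boxes, and you need the $m-2$ \emph{previously processed} generators to collectively depress the tops of the new intervals in only $\lfloor (m-2)/2\rfloor$ coordinates. That is a global constraint on the whole partition, not a local charge per peeling step, and nothing in the sketch forces it. The ``first violated coordinate'' slicing that works for one subtracted box degrades to one lost coordinate per subtracted box, which is exactly why the naive argument only gives $n-(m-1)$.

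A useful sanity check on how much is missing: for $I=\mm=(x_1,\dots,x_n)$ your claim specializes to partitioning the Boolean lattice minus its minimum into intervals $[A,B]$ with $|B|\ge\lceil n/2\rceil$, which is precisely the nontrivial theorem of Biro, Howard, Keller, Trotter and Young \cite{par}; any correct version of your pairing argument must reproduce or invoke that construction, and the sketch does neither. The alternative route you mention (replacing a pair $u_{m-1},u_m$ by $\lcm(u_{m-1},u_m)$) also does not obviously preserve the poset $P_I^g$ or the Stanley depth, so it would need its own justification. To close the gap you would either have to supply the explicit pairing/matching construction with a proof that the resulting intervals are disjoint, cover $P$, and have tops non-maximal in at most $\lfloor m/2\rfloor$ coordinates, or abandon the direct partition and argue by induction on the ambient data, e.g.\ via a splitting of the form $I=(I\cap S')\oplus x_n(I:x_n)$ as in the proof of Theorem \ref{ma}, which is closer in spirit to how such bounds are actually established.
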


\begin{Proposition}\cite[Theorem 1.4]{mir}\label{3}
Let $I\subset S$ be a monomial ideal such that $I=v(I:v)$, for a monomial $v\in S$. Then $\sdepth(S/I)=\sdepth(S/(I:v))$, $\sdepth(I)=\sdepth(I:v)$.
\end{Proposition}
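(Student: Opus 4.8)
The hypothesis $I=v(I:v)$ is equivalent to the statement that $v$ divides every monomial of $I$, i.e. $I\subseteq (v)$: the ideal $v(I:v)$ is precisely the set of monomials of $I$ that are divisible by $v$, so $I=v(I:v)$ forces $I=I\cap(v)$. The plan is to exploit the graded $K$-linear map given by multiplication by $v$. This map is injective and, because $I\subseteq(v)$, carries the monomials of $I:v$ bijectively onto the monomials of $I$; hence it restricts to a $\ZZ^n$-graded isomorphism $I:v\cong I$ and induces a graded isomorphism $S/(I:v)\cong vS/I$ onto the submodule of $S/I$ spanned by the monomials divisible by $v$. Everything below is transported through these two isomorphisms.

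First I would settle $\sdepth(I)=\sdepth(I:v)$, which is the easy half. Given a Stanley decomposition $I:v=\bigoplus_i u_iK[Z_i]$, multiplying through by $v$ yields $I=\bigoplus_i (vu_i)K[Z_i]$, again a Stanley decomposition since each $vu_i$ is a monomial and the sets $Z_i$ are unchanged. Conversely, starting from $I=\bigoplus_i w_iK[Z_i]$, every generator satisfies $w_i\in I\subseteq(v)$, so $w_i/v$ is a monomial and $I:v=\bigoplus_i (w_i/v)K[Z_i]$. The two constructions are mutually inverse and preserve every $|Z_i|$, so $\sdepth(I)=\sdepth(I:v)$.

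For the quotient I would use the splitting of $S/I$ as a $\ZZ^n$-graded $K$-vector space into $vS/I$ (the monomials divisible by $v$) and the span $V$ of the monomials not divisible by $v$. Note that $V$ has the same monomial basis as $S/(v)$, so every Stanley space of $S/(v)$ is also a Stanley space lying in $V\subseteq S/I$, and $\sdepth(S/(v))=n-1$ by Proposition \ref{1} since $(v)$ is principal. For the inequality $\sdepth(S/I)\ge\sdepth(S/(I:v))$, I glue an optimal Stanley decomposition of $S/(I:v)$, pushed into $vS/I$ by multiplication by $v$, together with an optimal Stanley decomposition of $S/(v)$ placed inside $V$; the union partitions the monomials of $S/I$ and has Stanley depth $\min\{\sdepth(S/(I:v)),\,n-1\}=\sdepth(S/(I:v))$, the last equality because $I\ne 0$ forces $\sdepth(S/(I:v))\le n-1$. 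For the reverse inequality, starting from any Stanley decomposition $S/I=\bigoplus_k w_kK[Z_k]$, I would intersect each Stanley space with $vS/I$: the monomials of $w_kK[Z_k]$ divisible by $v$ are nonempty exactly when $\deg_{x_i}(w_k)\ge\deg_{x_i}(v)$ for every $x_i\notin Z_k$, and in that case they form a single Stanley space $w_k''K[Z_k]$, obtained by raising the $Z_k$-variables of $w_k$ until $v\mid w_k''$. Dividing the surviving generators by $v$ transports this into a Stanley decomposition of $S/(I:v)$, giving $\sdepth(S/(I:v))\ge\sdepth(S/I)$ and hence equality.

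The routine checks — that the two glued families genuinely partition the monomials of $S/I$, and that the intersection-with-$vS/I$ step produces honest Stanley spaces — are straightforward. The step demanding care, and the main obstacle, is this last reverse inequality for the quotient: one must verify that discarding the Stanley spaces containing no multiple of $v$ and boosting the remaining generators to $w_k''$ still yields an exact direct sum covering every monomial of $vS/I$ exactly once, and that this surgery keeps each $Z_k$ intact so that the Stanley depth can only rise, never fall.
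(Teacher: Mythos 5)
The paper does not prove this proposition --- it is quoted from \cite{mir} without proof --- so there is no in-paper argument to compare against; your proof is correct and is essentially the standard one: $I=v(I:v)$ forces $I\subseteq (v)$, multiplication by $v$ then bijects the monomials of $I:v$ with those of $I$ and the monomials of $S/(I:v)$ with the monomials of $S/I$ divisible by $v$, and Stanley decompositions are transported both ways through this bijection. In particular the two delicate points you flag --- gluing in a depth-$(n-1)$ decomposition of $S/(v)$ for the inequality $\sdepth(S/I)\ge\sdepth(S/(I:v))$, and for the reverse inequality intersecting each Stanley space $w_kK[Z_k]$ with the multiples of $v$ to obtain either the empty set or the single space $\lcm(w_k,v)K[Z_k]$ --- both work exactly as you describe.
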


If $v\in S$ is a monomial, we define the \emph{support of $v$}, to be $\supp(v):=\{x_j:\;x_j|v\}$. Also, we denote $\deg_{x_j}(v):=\max\{t:\;x_j^t|v\}$. Let $I=(v_1,\ldots,v_m)\subset S$, $I\not =S$ be a monomial ideal, where  $G(I)=\{v_1,\ldots,v_m\}$ is a minimal system of monomial generators of $I$. We denote $t_j:=|\{i:\;x_j|v_i\}|$ and $\mathcal V:=\bigcup_{i=1}^m \supp(v_i)$.

\begin{Remark} \label{r}{\em
It is well known that $\depth(S/I)\leq \min\{\dim(S/P):\;P\in \Ass(S/I)\}=\min\{n-|P|:\;P\in \Ass(S/I)\}$ by \cite[Proposition 1.2.13]{bh}. Denote $k=\max\{|P|:\;P\in \Ass(S/I) \}$. In particular, we get $k\leq n-\depth(S/I)$. We have $k\leq m$ because a prime ideal $P\in \Ass(S/I)$ has the form $I:u$ for some monomial $u\not \in I$.}
\end{Remark}
 With these notations, we have the following lemma:

\begin{Lemma}\label{m}
There exists a $j\in [n]:=\{1,\ldots,n\}$ such that $t_j\geq \left\lceil m/k \right\rceil$.
\end{Lemma}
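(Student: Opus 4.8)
The plan is to exploit the standard combinatorial description of the minimal primes of a monomial ideal as minimal covers of the generator supports, and then finish by a pigeonhole argument.

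First I would recall that the minimal primes of $I$ are exactly the monomial primes $P_F=(x_j:\;j\in F)$ where $F\subseteq [n]$ is a \emph{minimal} subset with the property that $\supp(v_i)\cap F\neq\emptyset$ for every $i$; that is, $F$ is a minimal cover of the supports $\supp(v_1),\dots,\supp(v_m)$. (Indeed $I\subseteq P_F$ holds iff every generator is divisible by some $x_j$ with $j\in F$, and minimal such $F$ give the minimal primes, the non-squarefree case reducing to $\sqrt{I}$, which depends only on the supports.) Since $I\neq S$, at least one minimal prime exists; fix one, say $P=P_F$. Because every minimal prime is associated, $P\in\Ass(S/I)$, and hence $|F|\leq k$ by the definition of $k$.

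Next, by the covering property each generator $v_i$ is divisible by at least one variable $x_j$ with $j\in F$. Choosing one such $j$ for each $i$ defines a map from the $m$ generators into the set $F$, which has at most $k$ elements. By the pigeonhole principle some variable $x_{j_0}$ with $j_0\in F$ is the chosen divisor of at least $\lceil m/|F|\rceil$ generators; since each of these generators is then divisible by $x_{j_0}$, we get $t_{j_0}\geq \lceil m/|F|\rceil$. Finally $|F|\leq k$ gives $\lceil m/|F|\rceil\geq \lceil m/k\rceil$, which yields the claim.

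The only points requiring care are the identification of minimal primes with minimal covers of the supports, together with the containment $\Min(S/I)\subseteq\Ass(S/I)$; both are standard facts about monomial ideals, so once the right cover is in hand the argument reduces to counting. I expect the conceptual crux to be precisely the selection of an associated prime of size at most $k$ that \emph{simultaneously} covers every support: using a minimal prime is what delivers both the covering property and, via $\Min\subseteq\Ass$, the bound $|F|\leq k$ needed to make the pigeonhole estimate sharp.
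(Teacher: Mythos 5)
Your proof is correct, and it takes a genuinely different route from the paper's. The paper argues by a double induction on $k$ and on $\epsilon(I)=\sum_{i=1}^m\deg(v_i)$: when $(\mathcal V)\subseteq\sqrt I$ the ideal is $(\mathcal V)$-primary with $k=|\mathcal V|$ and the identity $\sum_{j}t_j=\sum_i|\supp(v_i)|\geq m$ gives the bound by the same pigeonhole count you use; otherwise it picks $x_n\in\mathcal V\setminus\sqrt I$ and passes to $I:x_n$, which has smaller $\epsilon$ and $k'\leq k$. Your argument replaces all of this with one observation: a minimal (hence associated) prime $P_F$ contains $I$, so $F$ covers every $\supp(v_i)$ while $|F|\leq k$, and pigeonhole finishes. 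In fact you need even less than minimality --- \emph{any} $P\in\Ass(S/I)$ satisfies $I\subseteq P$ and $|P|\leq k$, so any associated prime would serve as the cover; the minimal prime is just a convenient concrete choice. Your version is shorter, non-inductive, and avoids a delicate point in the paper's reduction step, namely that $\{v_i/\gcd(v_i,x_n)\}$ need not be a \emph{minimal} generating set of $I:x_n$, whereas the inductive hypothesis is formulated for minimal generating sets; your direct argument never has to confront this. What the paper's inductive setup buys in exchange is that the same colon-ideal reduction machinery is reused verbatim in the proof of Theorem \ref{ma}, so the two proofs there read in parallel, but for the lemma itself your approach is the cleaner one.
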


\begin{proof}
We use induction on $k\geq 1$ and $\epsilon(I)=\sum_{i=1}^m \deg(v_i)$. If $k=1$, it follows that $I$ is principal, and therefore, we can assume that $I=(v_1)$ and $m=1$.  If we chose $x_j\in \supp(v_1)$, it follows that $t_j=1=\left\lceil m/1 \right\rceil$ and thus we are done. If $\epsilon(I)=k$, it follows $\epsilon(I)=k\leq m\leq \epsilon(I)$ by Remark \ref{r}. Thus $I$ is
generated by $m=k$ variables, and there is nothing to prove. Assume $k\geq 2$ and $\epsilon(I)>k$.

Assume that $(\mathcal V)\subset \sqrt{I}$. Since, for any monomial $v\in G(\sqrt{I})$ we have $\supp(v)\subset \mathcal V$ it follows that the prime ideal $P:=(\mathcal V)$  contains also $\sqrt{I}$. Thus $P=
\sqrt{I}$ is a prime ideal and $P=(\mathcal V)$. Therefore, $I$ is $P$-primary. Since $k=|P|$, by reordering the variables, we may assume that $P=(x_1,\ldots,x_k)$. We  may also assume that $v_1=x_1^{a_1}$, \ldots, $v_r=x_k^{a_k}$ for some positive integers $a_l$, where $l\in [k]$.
Since $\mathcal V=\{x_1,\ldots,x_k\}$, it follows that $t_j=0$ for all $j>k$.
Note that $\sum_{i=1}^m |\supp(v_i)| = \sum_{j=1}^k t_j$. Indeed, each variable $x_j$ appear in the supports of exactly $t_j$ monomials from the set $\{v_1,\ldots,v_m\}$. Now, we claim that there exists a $t_j\geq \left\lceil m/k \right\rceil$. Indeed, if this is not the case, then we get $m\leq \sum_{i=1}^m |\supp(v_i)| = \sum_{j=1}^k t_j < \sum_{j=1}^k (m/k) = m$, a contradiction. 

If there exists a variable, let us say $x_n$, such that $x_n \in \mathcal V$ and $x_n\notin \sqrt{I}$, we consider the ideal $I'=(I:x_n)$. Obviously, $I'=(v'_1,\ldots,v'_m)$, where $v'_i=v_i/x_n$ if $x_n|v_i$ and $v'_i=v_i$ otherwise. For all $j\in[n]$, we denote $t_j'=|\{i:\;x_j|v'_i\}|$. Note that $t_j=t'_j$ for all $j\in[n-1]$, and $t_n\geq t'_n$. If we denote $\mathcal V'=\bigcup_{i=1}^m\supp(v'_i)$, we have $\mathcal V'\subset \mathcal V$. Note that $\Ass(S/I')\subset \Ass(S/I)$ because of the injection $S/I'\to S/I$ induced by the multiplication with $x_n$.
It follows that $k'=\max\{|P'|:\;P'\in\Ass(S/I')\} \leq k$. Since $\epsilon(I')=\sum_{i=1}^m \deg(v'_i)<\epsilon(I)$, by induction hypothesis, there exists a $j\in [n]$, such that  $t_j\geq t'_j\geq \left\lceil m/k' \right\rceil \geq \left\lceil m/k \right\rceil$.
\end{proof}

\begin{Example}{\em
Let $I=(x_1^3,x_1x_2, x_2x_3, x_3x_4, x_4^2)\subset S:=K[x_1,x_2,x_3,x_4]$. Then $I=(x_1^3,x_2,x_4)\cap (x_1^3, x_2,x_3,x_4^2)\cap (x_1,x_3, x_4^2)$ is the primary decomposition of $I$. Therefore $\Ass(S/I)=\{(x_1,x_2,x_4),(x_1,x_3,x_4), (x_1,x_2,x_3,x_4)\}$ and $k=\max\{|P|:\;P\in\Ass(S/I)\}=4$. The (minimal) number of monomial generators of $I$ is $m=5$. We have $\left\lceil m/k \right\rceil = 2$ and, indeed, $x_1$, for example, appears in two generators of $I$. This example also shows that the bound $\left\lceil m/k \right\rceil$ is, in general, the best possible.}
\end{Example}

\begin{Lemma}\label{e}
Let $s\geq k\geq 2$ be two integers and let $m$ be a positive integer. Then

 \begin{enumerate}
 \item{} $m-\left\lceil \frac{m}{k} \right\rceil \leq s-1$ if and only if $m\leq s-1+\left\lceil \frac{s}{k-1} \right\rceil$,
 \item{} $ \left\lfloor \frac{m-\left\lceil \frac{m}{k} \right\rceil }{2} \right\rfloor \leq s-2$ if and only if $m\leq 2s-3 + \left\lceil \frac{2s-2}{k-1} \right\rceil$.
     \end{enumerate}
\end{Lemma}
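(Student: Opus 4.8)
My plan is to isolate a single auxiliary equivalence that handles both parts at once, and then read off (1) and (2) as special cases. The motivation is that both sides of each stated biconditional are monotone threshold conditions on $m$: the quantity $m-\lceil m/k\rceil$ is non-decreasing in $m$ (increasing $m$ by $1$ raises $\lceil m/k\rceil$ by at most $1$), so the set of $m$ satisfying the left-hand inequality is an initial segment, while the right-hand side is literally of the form $m\le C$. Thus it suffices to pin down the exact cutoff and check it equals the constant on the right, which I will do by a direct computation rather than by invoking monotonicity separately.

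First I would prove the auxiliary claim: for every integer $N\ge 0$ one has $m-\lceil m/k\rceil\le N$ if and only if $m\le N+\lceil (N+1)/(k-1)\rceil$. The main tool is the identity $m-\lceil m/k\rceil=\lfloor m(k-1)/k\rfloor$, obtained by writing $m=qk+r$ with $0\le r<k$ and checking the cases $r=0$ and $r\ge 1$. Using $\lfloor x\rfloor\le N\iff x<N+1$ for integer $N$, the inequality $\lfloor m(k-1)/k\rfloor\le N$ becomes $m(k-1)\le (N+1)k-1$, i.e. $m\le ((N+1)k-1)/(k-1)=(N+1)+N/(k-1)$. Taking integer parts gives the cutoff $M=(N+1)+\lfloor N/(k-1)\rfloor$, and the standard identity $\lceil a/b\rceil=\lfloor (a-1)/b\rfloor+1$ (valid for integers $a\ge 1$, $b\ge 1$) with $a=N+1$ and $b=k-1$ rewrites this as $M=N+\lceil (N+1)/(k-1)\rceil$, which is exactly the asserted threshold. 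Here the hypothesis $k\ge 2$ is precisely what guarantees $k-1\ge 1$, so division by $k-1$ is legitimate.

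With the auxiliary claim in hand, part (1) is immediate: take $N=s-1$ (which is $\ge 1$ since $s\ge k\ge 2$) and note $\lceil (N+1)/(k-1)\rceil=\lceil s/(k-1)\rceil$. For part (2) I would first strip off the outer floor: for an integer $X$, $\lfloor X/2\rfloor\le s-2$ holds iff $X/2<s-1$, i.e. $X\le 2s-3$. Applying this with $X=m-\lceil m/k\rceil$ turns the left-hand inequality into $m-\lceil m/k\rceil\le 2s-3$, and then the auxiliary claim with $N=2s-3$ (again $\ge 0$ since $s\ge 2$) yields the threshold $2s-3+\lceil (2s-2)/(k-1)\rceil$, matching the right-hand side.

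There is no serious conceptual obstacle; the only real work is bookkeeping with floors and ceilings, where the danger is off-by-one errors. The two points that must be handled with care are the case split $r=0$ versus $r\ge 1$ when establishing $m-\lceil m/k\rceil=\lfloor m(k-1)/k\rfloor$, and the correct application of $\lceil a/b\rceil=\lfloor (a-1)/b\rfloor+1$, whose validity needs $a\ge 1$ — which is exactly why the hypotheses $s\ge k\ge 2$ (ensuring $s-1\ge 1$ and $2s-2\ge 1$) are invoked at the two applications of the auxiliary claim.
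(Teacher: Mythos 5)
Your proof is correct and follows essentially the same route as the paper's: both reduce the ceiling inequality to a strict rational inequality (your $\lfloor m(k-1)/k\rfloor\le N\iff m-m/k<N+1$ is the paper's $m-\lceil m/k\rceil\le s-1\iff m-m/k<s$) and then convert the bound $m<(N+1)+N/(k-1)$ back into the stated ceiling form. The only difference is presentational: you package the computation as a single auxiliary threshold claim applied twice (and spell out the floor/ceiling bookkeeping the paper leaves implicit), whereas the paper does part (1) directly and dismisses part (2) with ``similarly.''
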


\begin{proof}
 Note that $m-\left\lceil \frac{m}{k} \right\rceil \leq s-1$ if and only if $m - \frac{m}{k}<s$. This is equivalent with $m< \frac{sk}{k-1} = s + \frac{s}{k-1}$. Similarly, we get the second equivalence.
\end{proof}

\begin{Theorem}\label{ma}
Let $I\subset S=K[x_1,\ldots,x_n]$ be a monomial ideal, minimally generated by $m$ monomials,
 $k=\max\{|P|:\;P\in \Ass(S/I)\}$, and  $s\geq k$ be  an integer. Then
 \begin{enumerate}
\item{} If $m\leq s-1+\left\lceil \frac{s}{k-1} \right\rceil$, then $\sdepth(S/I)\geq n-s$.
\item{} If $m\leq 2s-3 + \left\lceil \frac{2s-2}{k-1} \right\rceil$, then $\sdepth(I)\geq n-s+1$.
\end{enumerate}
If $\depth(S/I)=n-s$ then $(1)$ and $(2)$ imply the Stanley Conjecture for $S/I$, respectively for $I$.
\end{Theorem}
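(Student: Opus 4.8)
The plan is to prove the two inequalities simultaneously by induction on $\epsilon(I)=\sum_{i=1}^m\deg(v_i)$, using Lemma \ref{m} to single out a good variable and two short exact sequences to peel it off, reducing the generator count by exactly the amount recorded in Lemma \ref{e}. After reindexing I may assume, by Lemma \ref{m}, that $t_n\geq\lceil m/k\rceil$, so that $x_n$ divides at least $\lceil m/k\rceil$ of the generators. Write $S'=K[x_1,\ldots,x_{n-1}]$ and let $\bar I\subset S'$ be the image of $I$ modulo $x_n$; every generator divisible by $x_n$ dies, so $\bar I$ is generated by the at most $m-t_n\leq m-\lceil m/k\rceil$ generators $v_i$ with $x_n\nmid v_i$. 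The degenerate cases are immediate: if $k=1$ then $I$ is principal and $\sdepth(S/I)=n-1\geq n-s$, $\sdepth(I)=n\geq n-s+1$; and if $m\leq s$ (resp. $m\leq 2s-1$) then Proposition \ref{1} (resp. Theorem \ref{2}) already gives the conclusion.

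For part $(1)$ I would invoke the short exact sequence of $\mathbb Z^n$-graded modules
\[ 0\longrightarrow S/(I:x_n)\stackrel{\cdot x_n}{\longrightarrow} S/I\longrightarrow S/(I,x_n)\longrightarrow 0, \]
together with the standard estimate $\sdepth(M)\geq\min\{\sdepth(U),\sdepth(N)\}$ for a short exact sequence $0\to U\to M\to N\to 0$. The right-hand term is annihilated by $x_n$, hence $S/(I,x_n)\cong S'/\bar I$ as $S'$-modules and $\sdepth_S(S/(I,x_n))=\sdepth_{S'}(S'/\bar I)\geq (n-1)-(m-t_n)\geq n-s$ by Proposition \ref{1} and Lemma \ref{e}$(1)$. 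For the left-hand term I apply the inductive hypothesis: $(I:x_n)$ has strictly smaller $\epsilon$, at most $m$ generators, and, because of the injection $S/(I:x_n)\hookrightarrow S/I$ recorded in Remark \ref{r}, its invariant $k'=\max\{|P|:P\in\Ass(S/(I:x_n))\}$ satisfies $k'\leq k$; since $k'\leq k$ only enlarges the bound $s-1+\lceil s/(k-1)\rceil$, the hypothesis is inherited and the induction yields $\sdepth(S/(I:x_n))\geq n-s$ (when $k'=1$ or $(I:x_n)=S$ this term is handled trivially). Taking the minimum gives $\sdepth(S/I)\geq n-s$.

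For part $(2)$ I run the same argument on $I$ itself, via
\[ 0\longrightarrow (I:x_n)\stackrel{\cdot x_n}{\longrightarrow} I\longrightarrow I/x_n(I:x_n)\longrightarrow 0. \]
The cokernel has as $K$-basis exactly the monomials of $I$ not divisible by $x_n$, and $x_n$ acts on it as zero, so it is isomorphic to $\bar I$ as an $S'$-module; thus $\sdepth_S(I/x_n(I:x_n))=\sdepth_{S'}(\bar I)\geq (n-1)-\lfloor (m-t_n)/2\rfloor\geq n-s+1$ by Theorem \ref{2} and Lemma \ref{e}$(2)$. The submodule $(I:x_n)$ is treated by the inductive hypothesis exactly as in part $(1)$, and the short exact sequence estimate finishes the bound $\sdepth(I)\geq n-s+1$. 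For the final clause, if $\depth(S/I)=n-s$ then Remark \ref{r} gives $k\leq s$, so the hypotheses are consistent; the depth lemma applied to $0\to I\to S\to S/I\to 0$ gives $\depth(I)=\depth(S/I)+1=n-s+1$ (using $\depth(S/I)<\depth(S)=n$), whence $(1)$ and $(2)$ read $\sdepth(S/I)\geq\depth(S/I)$ and $\sdepth(I)\geq\depth(I)$, i.e. Stanley's Conjecture for $S/I$ and for $I$.

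The hard part will be the bookkeeping that lets the induction close, namely checking that passing to $(I:x_n)$ preserves the numerical hypotheses on $m$ and $k$; this is precisely where the monotonicity built into Lemma \ref{e} (the bounds only relax as $k$ decreases) and the containment $\Ass(S/(I:x_n))\subset\Ass(S/I)$ from Remark \ref{r} are indispensable. A simple example such as $I=(x_1x_3,x_2x_3)$ shows that the naive inequality $\sdepth_S(S/I)\geq\sdepth_{S'}(S'/\bar I)$ is false, so the $(I:x_n)$ term genuinely cannot be dropped and the induction is forced; the remaining work is the routine identification of $S/(I,x_n)$ and $I/x_n(I:x_n)$ with objects in $S'$ carrying at most $m-\lceil m/k\rceil$ generators, which is what turns the counting of Lemma \ref{m} into the generator bounds demanded by Proposition \ref{1} and Theorem \ref{2}.
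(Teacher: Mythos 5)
Your proof is correct and follows essentially the same route as the paper: induction on $\epsilon(I)$, Lemma \ref{m} to locate a variable $x_n$ dividing at least $\left\lceil m/k \right\rceil$ generators, the part of $I$ not involving $x_n$ controlled via Proposition \ref{1}, Theorem \ref{2} and Lemma \ref{e}, and the colon ideal $(I:x_n)$ controlled by the induction hypothesis using $k'\leq k$ and $\Ass(S/(I:x_n))\subseteq\Ass(S/I)$. The only cosmetic difference is that you package the splitting as two short exact sequences plus the standard inequality $\sdepth(M)\geq\min\{\sdepth(U),\sdepth(N)\}$, whereas the paper writes the underlying $\ZZ^n$-graded vector-space decompositions $S/I=(S'/(I\cap S'))\oplus x_n(S/(I:x_n))$ and $I=(I\cap S')\oplus x_n(I:x_n)$ explicitly and glues the Stanley decompositions by hand; these two devices amount to the same thing here.
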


\begin{proof}
If $I$ is principal, then $k=m=1$ and $\sdepth(S/I)\geq n-1$ by Proposition \ref{1}. Assume $m\geq 2$, $G(I)=\{v_1,\ldots,v_m\}$ and set $\epsilon(I):=\sum_{i=1}^m \deg(v_i)$. We use induction on $\epsilon(I)$.
If $\epsilon(I)=m$ it follows that $I$ is generated by $m$ variables. Therefore $k=|I|=m$ and so $\sdepth_S(S/I)=n-m=n-k\geq n-s$.
Also by \cite[Theorem 2.2]{par} and \cite[Lemma 3.6]{hvz}, $\sdepth(I)=n - \left\lfloor m/2 \right\rfloor \geq n-m+1 = n-k+1 \geq n-s+1$.

Assume $\epsilon(I)>m$. According to Lemma \ref{m}, we can assume that $r:=t_n\geq \left\lceil m/k \right\rceil$ after renumbering of variables.
If $r=m$, then $x_n|v_j$ for all all $i\in [m]$ and thus $I=x_n(I:x_n)$. According to Proposition \ref{3}, $\sdepth(S/I)=\sdepth(S/(I:x_n))$ and $\sdepth(I)=\sdepth(I:x_n)$.
As in the proof of Lemma \ref{m}, we have $k'=\max\{|P'|:\;P'\in Ass(S/(I:x_n))\}\leq k$ and so our statement holds for $S/(I:x_n)$ by induction hypothesis. Thus $\sdepth(S/I) = \sdepth(S/(I:x_n))\geq n-s$ and $\sdepth(I) = \sdepth(I:x_n) \geq n-s+1$.

We consider now the case $r<m$. By reordering the generators of $I$, we may assume that $x_n|v_1$,\ldots,$x_n|v_r$ and $v_n\not | v_{r+1}$,\ldots, $x_n\not | v_m$. Let $S'=K[x_1,\ldots,x_{n-1}]$. We write:
\[ (*)\;\;\; S/I = (S'/(I \cap S'))\oplus x_n(S/(I:x_n)),\;\; \mbox{and}\; \; I = (I\cap S')\oplus x_n(I:x_n) \]
the direct sum being of linear spaces.
By Proposition  \ref{1}, Theorem \ref{2} and Lemma \ref{e}, it follows that:
\[ \sdepth_{S'}(S'/(I\cap S')) \geq (n-1)-(m-r) \geq n-(m-\left\lceil \frac{m}{k} \right\rceil +1)\geq n-s\; \mbox{and}\]
\[ \sdepth_{S'}(I\cap S') \geq (n-1)-\left\lfloor \frac{m-r}{2} \right\rfloor \geq n- \left\lfloor \frac{m-\left\lceil \frac{m}{k} \right\rceil}{2} \right\rfloor  +1 \geq n-s+1,\]
because $r\geq \left\lceil m/k \right\rceil$.
Let $m'$ be the minimal number of generators of $I:x_n$. In the first case, we have $m'\leq m\leq  s-1+\left\lceil \frac{s}{k-1} \right\rceil\leq
 s-1+\left\lceil \frac{s}{k'-1} \right\rceil$ because $k'\leq k$.
By induction hypothesis, we get $\sdepth(S/(I:x_n))\geq n-s$. Similarly, $\sdepth(I:x_n)\geq n-s+1$ in the second case.
Using the decompositions $(*)$, we  obtain  Stanley decompositions of $S/I$, $I$ with the Stanley depth $\geq n-s$, respectively $\geq n-s+1$.
\end{proof}

\begin{Corollary}\label{main} Let $I$ be a monomial almost complete intersection ideal. Then Stanley's Conjecture holds for  $S/I$ and $I$.
 \end{Corollary}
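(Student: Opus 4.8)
The plan is to deduce the Corollary from Theorem~\ref{ma} by making a single, optimal choice of the free parameter $s$. Recall that a monomial almost complete intersection ideal $I$ is, by definition, minimally generated by $m=h+1$ monomials, where $h=\height(I)$; I assume $I$ is a nonzero proper ideal, so that $h\geq 1$ and $m\geq 2$. I would set $s:=n-\depth(S/I)$, so that $\depth(S/I)=n-s$ holds by construction. This is exactly the equality under which the last sentence of Theorem~\ref{ma} converts conditions $(1)$ and $(2)$ into the Stanley Conjecture for $S/I$ and for $I$; hence it remains only to check that $s\geq k$ and that the two numerical hypotheses $(1)$ and $(2)$ hold for this $s$.

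For the numerical input I would record two facts. First, Remark~\ref{r} gives $k\leq n-\depth(S/I)=s$, so the standing hypothesis $s\geq k$ of Theorem~\ref{ma} is met. Second, every minimal prime of $S/I$ is an associated prime, and for a monomial prime $|P|$ equals its height, so
$h=\min\{|P|:P\in\Min(S/I)\}\leq\max\{|P|:P\in\Ass(S/I)\}=k$; combined with $k\leq s$ this yields $m=h+1\leq k+1\leq s+1$.

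Granting $k\geq 2$ (discussed below), I would then verify $(1)$ and $(2)$ directly. Since $s\geq k\geq 2$ we have $s>k-1$, hence $\lceil s/(k-1)\rceil\geq 2$ and
\[ s-1+\left\lceil \frac{s}{k-1}\right\rceil \;\geq\; s+1 \;\geq\; m, \]
which is $(1)$. Likewise $k\leq s$ gives $(2s-2)/(k-1)\geq(2s-2)/(s-1)=2$, so $\lceil (2s-2)/(k-1)\rceil\geq 2$, and using $s\geq 2$,
\[ 2s-3+\left\lceil \frac{2s-2}{k-1}\right\rceil \;\geq\; 2s-1 \;\geq\; s+1 \;\geq\; m, \]
which is $(2)$. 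Theorem~\ref{ma} then delivers $\sdepth(S/I)\geq n-s$ and $\sdepth(I)\geq n-s+1$, which, since $\depth(S/I)=n-s$, is precisely the Stanley Conjecture for $S/I$ and for $I$.

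The one point needing genuine care --- and the main obstacle --- is the bound $k\geq 2$, i.e.\ ruling out $k=1$. If $h\geq 2$ then $k\geq h\geq 2$ for free. The remaining case is $h=1$, $m=2$; here I would argue that a monomial ideal with $k=1$ is necessarily principal: $k=1$ forces every associated prime to be some $(x_j)$, so in an irredundant primary decomposition each component is $(x_j)$-primary, hence a pure power $(x_j^{a})$, and the intersection of finitely many such components is again a principal monomial ideal. A principal ideal has $m=1\neq h+1$, contradicting the almost complete intersection hypothesis; therefore $k\geq 2$ in all cases, and the numerical verification above goes through, establishing the Corollary.
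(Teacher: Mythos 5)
Your proof is correct and follows essentially the same route as the paper: choose $s=n-\depth(S/I)$, verify $s\geq k$ and the numerical hypotheses of Theorem \ref{ma}, and conclude. You are in fact slightly more careful than the paper, which asserts $m=s+1$ where only $m\leq s+1$ is true in general (and is all that is needed), and which disposes of the degenerate case by noting $s=1$ forces $k=1$ and hence $I$ principal, whereas you rule out $k=1$ directly from the almost complete intersection hypothesis; both treatments are sound.
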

 \begin{proof} Let  $s:=n-\depth(S/I)$. Then $m=s+1$. Since $s\geq k$ by Remark \ref{r} we have  $m\leq s-1+\left\lceil \frac{s}{k-1} \right\rceil$  and we see  that $S/I$ satisfies Stanley's Conjecture by (1) of the above theorem. If $s\geq 2$ then similarly  $m\leq 2s-3 + \left\lceil \frac{2s-2}{k-1} \right\rceil$ and so $I$ satisfies Stanley's Conjecture by (2) of the above theorem. But if $s=1$ then $k=1$ and it follows that $I$ is principal, in which case clearly Stanley's Conjecture holds.
\end{proof}

\begin{Remark}{\em
Note that the decomposition used in the proof of Theorem $1.8$ is also useful to check  Stanley's Conjecture for monomial ideals (and their quotient rings), which do not satisfy the hypothesis of the quoted theorem.
For example, consider the ideal $I=(x_1x_2,x_1x_3,x_1x_4,x_2x_3,x_2x_4,x_3x_4)\subset S:=K[x_1,\ldots,x_4]$. We have $(I:x_1)=(x_2,x_3,x_4)$. If we denote $S':=K[x_1,x_2,x_3]$, then $I':=I\cap S' = (x_1x_2,x_1x_3,x_2x_3)$. One can easily check that $\sdepth_{S'}(S'/I')=1$ and $\sdepth_{S'}(I')=2$. Using the decomposition $I=I' \oplus x_1(I:x_1)$, it follows, as in the proof of Theorem $1.8$, that $\sdepth(I)\geq 2$ and $\sdepth(S/I)\geq 1$. On the other hand, it is well known that $\depth(S/I)=1$. Of course, $I$ is a squarefree Veronese ideal, and we already know that $I$ and $S/I$ satisfy the Stanley conjecture, by \cite[Corollary 1.2]{mirc}.}
\end{Remark}

\end{document}